\newtheorem{theorem}{Theorem}
\theoremstyle{plain}
\newtheorem{corollary}{Corollary}
\numberwithin{equation}{section}
\begin{document}

\title[Sharp Wirtinger's type inequalities for double integrals ]{Sharp Wirtinger's type inequalities for double integrals with Applications}
\author[M.W. Alomari]{Mohammad W. Alomari}
\address{Department of Mathematics,
Faculty of Science and Information Technology, Irbid National
University, 21110 Irbid, Jordan} \email{mwomath@gmail.com}


\date{\today}
\subjclass[2000]{Primary 26D15 ; Secondary 26B30, 26B35.}

\keywords{Wirtinger's inequality, \v{C}eby\v{s}ev functional,
Gr\"{u}ss inequality}

\begin{abstract}
In this work, sharp Wirtinger type inequalities for double
integrals are established. As applications, two sharp
\v{C}eby\v{s}ev type inequalities for absolutely continuous
functions whose second partial derivatives belong to $L^2$ space
are proved.
\end{abstract}

\maketitle

\section{Introduction}
The theory of Fourier series has a significant role in almost all
branches of mathematical and numerical analysis. A very
interesting connection between inequalities and Fourier series has
been made along more than a hundred year ago. The celebrated
Bessel's integral inequality
\begin{align}
2a_0^2  + \sum\limits_{n = 1}^\infty {\left( {a_n^2  + b_n^2 }
\right)}  \le \frac{1}{\pi }\int_{ - \pi }^\pi  {f^2 \left( x
\right)dx},
\end{align}
was named after Bessel death and considered from that time as the
first adobe in this connection and started point for other related
works after the end of 18-th century.

In 1916, Wirtinger \cite{Blaschke} credibly proved his inequality
regarding square integrable periodic functions, which reads:
\begin{theorem}
\label{W.thm}Let $f$ be a real valued function with period $2\pi$
and $\int_0^{2\pi}f\left({x}\right)dx=0$. If $f' \in L^2[0,2\pi]$,
then
\begin{align}
\label{eq1.1}\int_0^{2\pi } {f^2 \left( x \right)dx}  \le
\int_0^{2\pi } {f'^2 \left( x \right)dx},
\end{align}
with equality if and only if $f(x)=A\cos x + B \sin x$, $A,B\in
\mathbb{R}$.
\end{theorem}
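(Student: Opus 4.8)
The plan is to argue through Fourier series and Parseval's identity, since the hypotheses (periodicity, square-integrability of $f'$, and the vanishing mean) are precisely what make the spectral picture clean. First I would expand $f$ in its Fourier series on $[0,2\pi]$,
\begin{align*}
f(x) = \frac{a_0}{2} + \sum_{n=1}^\infty \left( a_n \cos nx + b_n \sin nx \right),
\end{align*}
and immediately invoke the side condition $\int_0^{2\pi} f(x)\,dx = 0$, which forces $a_0 = 0$. This is the step where the normalization hypothesis earns its keep: without it the constant mode contributes to the left side of \eqref{eq1.1} but not to the right, and the inequality simply fails.

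Next I would apply Parseval's identity to $f$ itself, obtaining
\begin{align*}
\int_0^{2\pi} f^2(x)\,dx = \pi \sum_{n=1}^\infty \left( a_n^2 + b_n^2 \right),
\end{align*}
and then differentiate the series term by term to get the Fourier expansion of $f'$, whose coefficients are $-n a_n$ and $n b_n$. Applying Parseval a second time, now to $f'$, yields
\begin{align*}
\int_0^{2\pi} f'^2(x)\,dx = \pi \sum_{n=1}^\infty n^2 \left( a_n^2 + b_n^2 \right).
\end{align*}
Subtracting the two expressions collapses the whole problem to a termwise comparison:
\begin{align*}
\int_0^{2\pi} f'^2(x)\,dx - \int_0^{2\pi} f^2(x)\,dx = \pi \sum_{n=1}^\infty \left( n^2 - 1 \right)\left( a_n^2 + b_n^2 \right) \ge 0,
\end{align*}
because each factor $n^2 - 1$ is nonnegative for $n \ge 1$. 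This establishes \eqref{eq1.1}.

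For the equality characterization I would read off the nonnegative sum: it vanishes exactly when $(n^2-1)(a_n^2+b_n^2)=0$ for every $n$, i.e. when $a_n = b_n = 0$ for all $n \ge 2$, leaving only the first harmonic $f(x) = a_1 \cos x + b_1 \sin x$, which is the claimed form with $A = a_1$, $B = b_1$. The step I expect to require the most care is the legitimacy of differentiating the series term by term and applying Parseval to $f'$; the clean way to handle this is to not differentiate at all but instead identify the Fourier coefficients of $f'$ directly via integration by parts (the boundary terms cancel by periodicity), so that $\widehat{f'}(n) = in\widehat{f}(n)$ holds as an exact relation between $L^2$ coefficients. Since $f' \in L^2[0,2\pi]$ by hypothesis, Parseval applies to it verbatim, and no pointwise convergence of the differentiated series is ever needed.
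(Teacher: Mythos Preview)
Your proof is correct and is the standard Fourier-series argument for Wirtinger's inequality. Note, however, that the paper does not actually prove this theorem: it is stated in the introduction as a classical result (attributed to Wirtinger via Blaschke, 1916) and serves only as historical background for the paper's own results, which begin in Section~\ref{sec2}. So there is no ``paper's proof'' to compare against here.

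That said, your argument is the textbook one and is cleanly executed. The key observation you flag at the end---that one should identify the Fourier coefficients of $f'$ directly by integration by parts rather than by differentiating the series---is exactly the right way to make the argument rigorous under the stated hypothesis $f' \in L^2[0,2\pi]$, and the periodicity ensures the boundary terms vanish. The equality characterization via $(n^2-1)(a_n^2+b_n^2)=0$ is also correctly read off. Nothing is missing.
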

Many authors have considered a main attention for Wirtinger's
inequality and therefore, several generalizations, counterparts
and refinements was collected in a chapter of the book
\cite{Mitrinvovic}.

In 1967, Diaz and Metcalf \cite{Diaz} have extended and
generalized Wirtinger inequality and they proved the following
result:
\begin{theorem}
\label{Diaz.thm}Let $f$ be continuously differentiable on $(a,b)$.
Suppose $f(t_1)=f(t_2)$ for $a\le t_1 \le t_2 \le b$, then the
inequality
\begin{multline}
\label{eq1.3}\int_a^{b} {\left[f \left( x \right)-f \left( t_1
\right)\right]^2 dx}
\\
\le \frac{4}{\pi^2} \max \left\{ {\left( {t_1 - a} \right)^2
,\left( {b - t_2 } \right)^2 ,\left( {\frac{{t_2  - t_1 }}{2}}
\right)^2 } \right\} \int_a^{b } {f'^2 \left( x \right)dx},
\end{multline}
holds. In particular, if $t_1=t_2=t$, then
\begin{align}
\label{eq1.4}\int_a^{b} {\left[f \left( x \right)-f \left( t
\right)\right]^2 dx} \le \frac{4}{\pi^2}  \left[ {\frac{{b -
a}}{2} + \left| {t  - \frac{{a + b}}{2}} \right|} \right]^2
\int_a^{b } {f'^2 \left( x \right)dx},
\end{align}
\end{theorem}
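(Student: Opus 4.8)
The plan is to reduce \eqref{eq1.3} to two one-sided Wirtinger inequalities on subintervals and then paste them together. First I would set $c:=f(t_1)=f(t_2)$ and $g:=f-c$, so that $g'=f'$ and, crucially, $g(t_1)=g(t_2)=0$; this turns the left-hand side of \eqref{eq1.3} into $\int_a^b g^2$, and it remains to bound $\int_a^b g^2$ by a multiple of $\int_a^b g'^2=\int_a^b f'^2$.

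The real content is two auxiliary estimates for a continuously differentiable $h$ on an interval of length $L$: if $h$ vanishes at one endpoint then $\int h^2\le\frac{4L^2}{\pi^2}\int h'^2$ (call it (i)), while if $h$ vanishes at both endpoints then $\int h^2\le\frac{L^2}{\pi^2}\int h'^2$ (call it (ii)). Both follow from Theorem \ref{W.thm} by reflection and rescaling. For (ii) I would extend $h$ from $[0,L]$ by odd reflection to $[-L,L]$ and then $2L$-periodically; the extension is continuous with mean zero, so the $2\pi$-periodic inequality of Theorem \ref{W.thm}, rescaled to period $2L$, gives the constant $\frac{(2L)^2}{4\pi^2}=\frac{L^2}{\pi^2}$, and the two symmetric halves then cancel. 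For (i) I would instead reflect $h$ evenly about the endpoint where it does \emph{not} vanish, producing a function on an interval of length $2L$ that vanishes at both ends; applying (ii) there and again exploiting the symmetric halves leaves the factor $\frac{4L^2}{\pi^2}$. The only delicate point is that each reflection merely remains absolutely continuous---a corner appears at the reflection point---but since $h'\in L^2$ and these Wirtinger inequalities persist under that weaker hypothesis, this is harmless.

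With (i) and (ii) in hand I would split $[a,b]=[a,t_1]\cup[t_1,t_2]\cup[t_2,b]$, apply (i) on $[a,t_1]$ using $g(t_1)=0$, apply (i) on $[t_2,b]$ using $g(t_2)=0$, and apply (ii) on $[t_1,t_2]$ using $g(t_1)=g(t_2)=0$. Summing the three bounds and writing the middle factor $\frac{(t_2-t_1)^2}{\pi^2}$ as $\frac{4}{\pi^2}\big(\frac{t_2-t_1}{2}\big)^2$ so that all coefficients are comparable to the terms in the stated maximum, I obtain
\[
\int_a^b g^2\le\frac{4}{\pi^2}\left[(t_1-a)^2\int_a^{t_1} g'^2+\left(\tfrac{t_2-t_1}{2}\right)^2\int_{t_1}^{t_2} g'^2+(b-t_2)^2\int_{t_2}^{b} g'^2\right].
\]
Replacing each coefficient by $M:=\max\{(t_1-a)^2,(b-t_2)^2,(\tfrac{t_2-t_1}{2})^2\}$ and recombining the integrals into $\int_a^b g'^2=\int_a^b f'^2$ yields \eqref{eq1.3} at once.

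For the special case $t_1=t_2=t$ the middle interval degenerates and only the two end terms remain, so $M$ collapses to $\max\{(t-a)^2,(b-t)^2\}$. I would finish with the elementary identity $\max\{t-a,\,b-t\}=\frac{b-a}{2}+\big|t-\frac{a+b}{2}\big|$ (the larger of two reals equals their mean plus half their absolute difference); squaring turns $M$ into $\big[\frac{b-a}{2}+|t-\frac{a+b}{2}|\big]^2$ and produces \eqref{eq1.4}. The main obstacle is really the second step: pinning down the sharp constants $4L^2/\pi^2$ and $L^2/\pi^2$ and reconciling the reflection/periodization with the mean-zero and periodicity hypotheses of Theorem \ref{W.thm}; once those are secure, the splitting argument is entirely routine.
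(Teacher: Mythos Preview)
The paper does not supply a proof of this theorem: it is quoted in the Introduction as a known result of Diaz and Metcalf \cite{Diaz}, with no argument given. So there is no ``paper's own proof'' to compare against.

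That said, your proposal is correct and is essentially the original Diaz--Metcalf argument. The reduction $g=f-c$ is the right normalization; the two auxiliary Wirtinger inequalities (i) and (ii) with sharp constants $4L^2/\pi^2$ and $L^2/\pi^2$ are standard, and your reflection proofs of them are the usual ones (the even reflection in (i) producing a function vanishing at both ends of an interval of length $2L$, then feeding into (ii), is exactly how one obtains the factor~$4$). The three-piece splitting and the replacement of each coefficient by the maximum $M$ are routine, and the identity $\max\{t-a,b-t\}=\tfrac{b-a}{2}+\bigl|t-\tfrac{a+b}{2}\bigr|$ is correct and gives \eqref{eq1.4} immediately. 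Your remark about absolute continuity at the reflection points is apt and handles the only technical wrinkle.
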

For other related results see \cite{Beesack1}, \cite{Beesack2} and
\cite{Milovanovic}.

One of the most direct applicable usage of (\ref{eq1.3}) were
considered in several works regarding the famous
\emph{\v{C}eby\v{s}ev functional}
\begin{align}
\label{identity}\mathcal{T}\left( {f,g} \right) = \frac{1}{{b -
a}}\int_a^b {f\left( t \right)g\left( t \right)dt}  - \frac{1}{{b
- a}}\int_a^b {f\left( t \right)dt}  \cdot \frac{1}{{b -
a}}\int_a^b {g\left( t \right)dt}.
\end{align}
which compare or measure the difference between the integral
product of two functions with their corresponding integrals
product.

In 1970, Ostrowski \cite{Ostrowski}  proved that  if $f', g' \in
L^2[a, b]$, then there exists a constant $C$, $0\le C \le
\frac{b-a}{8}$, such that
\begin{equation}
\label{Ostrowski}\left| {\mathcal{T}\left( {f,g} \right)} \right|
\le C\left\| {f'} \right\|_2 \left\| {g'} \right\|_2.
\end{equation}
After that in 1973, A. Lupa\c{s} \cite{Lupas} has improved the
result of Ostrowski's (\ref{Ostrowski}) and proved that
\begin{equation}
\label{eq.Lupas}\left| {\mathcal{T}\left( {f,g} \right)} \right|
\le \frac{b-a}{{\pi}^2}\left\| {f'} \right\|_2 \left\| {g'}
\right\|_2.
\end{equation}
where, the constant $\frac{1}{\pi^2}$ is the best possible.

In this work we deal with the problem: what is the best possible
constant $C$ would the inequality
\begin{align}
\int_c^d{\int_a^b {f^2\left( {x,y} \right)dx}dy} \le C
\int_c^d{\int_a^b {\left( {\frac{{\partial ^2 f}}{{\partial
x\partial y}}} \right)^2}dxdy}
\end{align}
holds, whenever $f,g \in \mathfrak{L}^2(I)$. This question is a
natural extension of Diaz-Metcalf inequality (\ref{eq1.3}), as
well as the complementary works of Beesack and Milovanovi\'{c} in
one variable, see  \cite{Beesack1}, \cite{Beesack2} and
\cite{Milovanovic}.

Accordingly, for the \emph{\v{C}eby\v{s}ev functional}
\begin{align*}
\mathcal{T}\left( {f,g} \right) &:= \frac{1}{{\left( {b - a}
\right)\left( {d - c} \right)}}\int_a^b {\int_c^d {f\left( {t,s}
\right)g\left( {t,s} \right)dsdt} }
\nonumber\\
&\qquad- \frac{1}{{\left( {b - a} \right)\left( {d - c}
\right)}}\int_a^b {\int_c^d {f\left( {t,s} \right)dsdt} }
\frac{1}{{\left( {b - a} \right)\left( {d - c} \right)}}\int_a^b
{\int_c^d {g\left( {t,s} \right)dtds} },
\end{align*}
what is the best possible constant $C'$ would the inequality
\begin{align*}
\left| {\mathcal{T}\left( {f,g} \right)} \right| \le
C'\left\|{\frac{{\partial ^2 f}}{{\partial x\partial
y}}}\right\|_2\left\|{\frac{{\partial ^2 g}}{{\partial x\partial
y}}}\right\|_2
\end{align*}
holds, and this is an extension of the Lupa\c{s} inequality
(\ref{eq.Lupas}).

\section{Wirtinger's type inequalities} \label{sec2}
Let $I$ be a two dimensional interval and denotes $I^{\circ}$ its
interior, for $a,b,c,d \in \mathbb{R}$, we consider the subset
$\mathbb{D}:= \left\{ {\left( {x,y} \right):a \le x \le b,c \le y
\le d} \right\}\subseteq \mathbb{R}^2$ such that $\mathbb{D}
\subset I^{\circ}$. Also, define the subsets $I_-$ and $I^-$ of
$I$ as follows:
\begin{align*}
I^ -  : = I - \left\{ {b,d} \right\} = \left[ {a,b} \right) \times
\left[ {c,d} \right),\,\,\,\,{\rm{and}}\,\,\,\,\,I_ -  : = I -
\left\{ {a,c} \right\} = \left( {a,b} \right] \times \left( {c,d}
\right]
\end{align*}
Throughout of this section we assume that $f: I \to \mathbb{R}$
satisfies the boundary conditions: $f(a,\cdot)=f(\cdot,c)=0$,
$f_x(a,\cdot)=f_x(\cdot,c)=0$, $f_y(a,\cdot)=f_y(\cdot,c)=0$ on
$I^{-}$. Also, we assume $f(b,\cdot)=f(\cdot,d)=0$,
$f_x(b,\cdot)=f_x(\cdot,d)=0$, $f_y(b,\cdot)=f_y(\cdot,d)=0$ on
$I_{-}$, and both conditions on $I^{\circ}$.

Let $\mathfrak{L}^2(I)$ be the space of all functions $f$ which
are absolutely continuous on $I$, with $ \int_c^d\int_a^b {\left|
{\frac{{\partial ^2 f}}{{\partial x\partial y}}} \right|^2 dxdy} <
\infty $.

\begin{theorem}
\label{thm1}Let $f\in \mathfrak{L}^2(I^{-})$. Then the inequality
\begin{align}
\label{eq.thm1} \int_c^d{\int_a^b {f^2\left( {x,y} \right)dx}dy}
\le \frac{{16}}{{\pi ^4 }}\left( {b - a} \right)^2 \left( {d - c}
\right)^2  \int_c^d{\int_a^b {\left( {\frac{{\partial ^2
f}}{{\partial x\partial y}}} \right)^2}dxdy}
\end{align}
is valid. The constant $\frac{16}{\pi^2}$ is the best possible, in
the sense that it cannot be replaced by a smaller one.
\end{theorem}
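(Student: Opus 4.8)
The plan is to reduce the two–dimensional estimate to two successive applications of the one–variable Diaz–Metcalf inequality (\ref{eq1.4}), coupled with Fubini's theorem. The boundary data have been chosen precisely so that each one–dimensional slice of $f$, and of its first partial derivative, vanishes at the appropriate endpoint, which is exactly the hypothesis needed to run (\ref{eq1.4}) with the single node placed at that endpoint.

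First I would specialize (\ref{eq1.4}). Taking $t=a$ there gives $\left|t-\frac{a+b}{2}\right|=\frac{b-a}{2}$, so the bracket equals $b-a$ and, for any absolutely continuous $g$ on $[a,b]$ with $g(a)=0$,
\[
\int_a^b g^2(x)\,dx \le \frac{4}{\pi^2}(b-a)^2 \int_a^b g'^2(x)\,dx,
\]
with equality for $g(x)=\sin\!\big(\tfrac{\pi(x-a)}{2(b-a)}\big)$; the analogous statement holds on $[c,d]$ with constant $\frac{4}{\pi^2}(d-c)^2$.

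Now fix $y\in[c,d]$ and apply the displayed inequality to the slice $g(x)=f(x,y)$, which is admissible because $f(a,y)=0$; integrating the result in $y$ yields
\[
\int_c^d\!\int_a^b f^2(x,y)\,dx\,dy \le \frac{4}{\pi^2}(b-a)^2 \int_c^d\!\int_a^b f_x^2(x,y)\,dx\,dy.
\]
Next, for fixed $x$, I would apply the one–variable inequality in the $y$ variable to $h(y)=f_x(x,y)$, which vanishes at $y=c$ by the hypothesis $f_x(\cdot,c)=0$; since $\partial_y f_x=\frac{\partial^2 f}{\partial x\partial y}$ a.e., this gives $\int_c^d f_x^2\,dy\le\frac{4}{\pi^2}(d-c)^2\int_c^d\big(\tfrac{\partial^2 f}{\partial x\partial y}\big)^2\,dy$. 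Integrating in $x$ and invoking Fubini to interchange the order of integration, then combining with the previous step, produces the factor $\frac{4}{\pi^2}(b-a)^2\cdot\frac{4}{\pi^2}(d-c)^2=\frac{16}{\pi^4}(b-a)^2(d-c)^2$, which is exactly (\ref{eq.thm1}).

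For sharpness I would test the tensor product $f_0(x,y)=\sin\!\big(\tfrac{\pi(x-a)}{2(b-a)}\big)\sin\!\big(\tfrac{\pi(y-c)}{2(d-c)}\big)$ of the two one–dimensional extremals. Because both integrals in (\ref{eq.thm1}) factor across the variables for such a product, each of the two steps above becomes an equality, so the quotient of the two sides equals $\frac{16}{\pi^4}(b-a)^2(d-c)^2$ exactly, and no smaller constant can work. The point requiring care is that $f_0$ meets the Dirichlet-type conditions at $a,c$ but not every derivative condition built into the admissible class $\mathfrak{L}^2(I^-)$, so to remain inside that class one should instead exhibit a sequence $f_n$ satisfying all the stated boundary conditions with $f_n\to f_0$ in $\mathfrak{L}^2$, along which the quotient tends to $\frac{16}{\pi^4}(b-a)^2(d-c)^2$; verifying this convergence — together with the a.e. equality of the mixed partials and the integrability hypotheses behind the Fubini interchange — is the only genuinely delicate part of the argument.
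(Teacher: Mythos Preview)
Your argument is correct and is genuinely different from the paper's, though the two are related in spirit.  The paper proves the inequality from scratch by writing $f=g_1g_2h$ with $g_1(x)=\sin\omega_1(x-a)$, $g_2(y)=\sin\omega_2(y-c)$, and then carrying out a two-stage integration-by-parts computation directly on $\iint(f_{xy})^2$; the two stages are exactly the one-variable Beesack/Diaz--Metcalf computations, but unrolled inline rather than quoted.  You instead invoke the one-dimensional inequality (\ref{eq1.4}) at the endpoint $t=a$ (resp.\ $t=c$) as a black box, apply it slice-by-slice first in $x$ to $f(\cdot,y)$ using $f(a,\cdot)=0$, then in $y$ to $f_x(x,\cdot)$ using $f_x(\cdot,c)=0$, and combine via Fubini.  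This is shorter and more transparent, and it makes clear that the two-variable constant is simply the product of the two one-variable constants; the paper's route has the (slight) advantage of being self-contained and of exhibiting the boundary terms explicitly before they are shown to vanish.

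Your sharpness argument uses the same tensor-product extremal $f_0$ as the paper.  Your caveat that $f_0$ fails some of the listed boundary conditions (e.g.\ $f_x(a,\cdot)\neq 0$) is well observed, but note that the paper's own proof of the inequality does not actually use those extra conditions either---only $f(a,\cdot)=f(\cdot,c)=0$ and $f_x(\cdot,c)=0$ enter (and these $f_0$ does satisfy).  So the approximation-by-a-sequence step you flag is not strictly necessary; the paper simply plugs $f_0$ in directly, and you may safely do the same.
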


\begin{proof}
Let $a \le x <b$ and $c \le y <d$, since $f$ is absolutely
continuous then we can write $f\left( {x,y} \right) =
\int_c^y\int_a^x {f_{ts}\left({t,s} \right)dtds}$. If $a$ and $c$
are real numbers this is equivalent to saying that $f(a,c) = 0$
and $f$ is absolutely continuous on $[a,b)\times [c,d)$. Setting
$$f(x,y)=g_1(x)g_2(y)h(x,y)$$ where, $$g_1\left( x \right) = \sin
\omega_1 \left( {x - a} \right), \forall x\in [a,b)$$ with $
\omega_1 = \lambda _1^{1/2}$ and $\lambda _1  = \frac{{\pi^2 }}{{4
\left( {b - a} \right)^2 }}$, and $$g_2\left( y \right) = \sin
\omega_2 \left( {y - c} \right), \forall y\in [c,d)$$ with $
\omega_2 = \lambda _2^{1/2}$ and $\lambda _2  = \frac{{\pi^2 }}{{4
\left( {d - c} \right)^2 }}$.

\noindent Firstly, let us observe that since $ g'_1\left( x
\right) = \omega_1 \cos \omega_1 \left( {x - a} \right), $ and so
that

\noindent $ g''_1 \left( x \right) =  - \omega_1 ^2 g_1 \left( x
\right). $ Similarly, we have $ g''_2 \left( y \right) =  -
\omega_2 ^{2} g_2 \left( y \right) $.

For simplicity, since $$\frac{{\partial f}}{{\partial x}} = g_1
g_2 \frac{{\partial h}}{{\partial x}} + g_2 hg'_1, $$ then
\begin{align*}
\frac{{\partial ^2 f}}{{\partial x\partial y}} &= g_1 g_2
\frac{{\partial ^2 h}}{{\partial x\partial y}} + g_1 g'_2
\frac{{\partial h}}{{\partial x}} + g'_1 g'_2 h + g'_1 g_2
\frac{{\partial h}}{{\partial y}}
\\
&= \frac{d}{{dy}}\left( {g_1 g_2 \frac{{\partial h}}{{\partial x}}
+ g'_1 g_2 h} \right)
\\
&= g'_2 \left( {g_1 \frac{{\partial h}}{{\partial x}} + g'_1 h}
\right) + g_2 \frac{d}{{dy}}\left( {g_1 \frac{{\partial
h}}{{\partial x}} + g'_1 h}
  \right).
 \end{align*}

Setting $$\Phi : = \Phi \left( {x,y} \right) = g_1 \frac{{\partial
h}}{{\partial x}} + g'_1 h = \frac{{f_x }}{{g_2 }} \Rightarrow g_2
\Phi  = f_x,$$ therefore $$\frac{d}{dy}\left( {g_1 g_2
\frac{{\partial h}}{{\partial x}} + g'_1 g_2 h} \right) = g'_2
\left( {g_1 \frac{{\partial h}}{{\partial x}} + g'_1 h} \right) +
g_2 \left( {g_1 \frac{{\partial h}}{{\partial x}} + g'_1 h}
\right)^\prime
 = \Phi g'_2  + g_2 \Phi_y '.$$

Now, if $a < \alpha < \beta < b$, and \,$c < \gamma < \delta < d$,
we have
\begin{align}
\int_\alpha ^\beta {\int_\gamma ^\delta   {\left( {\frac{{\partial
^2 f}}{{\partial x\partial y}}} \right)^2 dydx} }
 &=  \int_\alpha ^\beta{ \int_\gamma ^\delta  {\left(
{\Phi g'_2  + g_2 \Phi_y '} \right)^2 dydx} }
\nonumber\\
&= \int_\alpha ^\beta{ \int_\gamma ^\delta   {\left( {\Phi g'_2 }
\right)^2 \left( {1 + \frac{{g_2 \Phi_y '}}{{\Phi g'_2 }}}
\right)^2 dydx} }
\nonumber\\
&\ge   \int_\alpha ^\beta{\int_\gamma ^\delta  {\left( {\Phi g'_2
} \right)^2 \left( {1 + 2\frac{{g_2 \Phi_y '}}{{\Phi g'_2 }}}
\right)dydx} }
\nonumber\\
&=  \int_\alpha ^\beta {\int_\gamma ^\delta  {\left( {\Phi g'_2 }
\right)^2 dydx} } + 2 \int_\alpha ^\beta {\int_\gamma ^\delta
{\left( {\Phi g'_2 } \right) g_2 \Phi_y 'dydx} }
\nonumber\\
&= \left. {\left. {g_2 \left( {g'_2 } \right) \Phi ^2 }
\right|_\gamma ^\delta  } \right|_\alpha ^\beta   - \int_\alpha
^\beta  {\int_\gamma ^\delta  {\left( {g_2 \left( {g''_2 } \right)
+(g'_2)^2} \right) \Phi ^2 dydx} }
\nonumber\\
&\qquad+ \int_\alpha ^\beta  {\int_\gamma ^\delta  {\left( {\Phi
g'_2 } \right)^2 dydx} }
\nonumber\\
&= \left. {\left. {g_2 \left( {g'_2 } \right)\Phi ^2 }
\right|_\gamma ^\delta  } \right|_\alpha ^\beta   - \int_\alpha
^\beta  {\int_\gamma ^\delta  {\left( { - \omega _2^2 g_2^2  +
\left( {g'_2 } \right)^2 } \right)\Phi ^2 dydx} } \label{eq2.4}
\\
&\qquad+ \int_\alpha ^\beta  {\int_\gamma ^\delta  {\left( {\Phi
g'_2 } \right)^2 dydx} }
\nonumber\\
&= \left. {\left. {g_2 \left( {g'_2 } \right)\Phi ^2 }
\right|_\gamma ^\delta  } \right|_\alpha ^\beta   + \omega _2^2
\int_\alpha ^\beta  {\int_\gamma ^\delta  {g_2^2 \Phi ^2 dydx} }
\nonumber\\
&\qquad- \int_\alpha ^\beta  {\int_\gamma ^\delta  {\left( {g'_2 }
\right)^2 \Phi ^2 dydx} }  + \int_\alpha ^\beta {\int_\gamma
^\delta  {\left( {\Phi g'_2 } \right)^2 dydx} }
\nonumber\\
&= \left. {\left. {g_2 \left( {g'_2 } \right)\Phi ^2 }
\right|_\gamma ^\delta  } \right|_\alpha ^\beta   + \omega _2^2
\int_\alpha ^\beta  {\int_\gamma ^\delta  {g_2^2 \Phi ^2 dydx} }
\nonumber\\
&= \left. {\left. {g_2 \left( {g'_2 } \right) \Phi ^2 }
\right|_\gamma ^\delta  } \right|_\alpha ^\beta   + \omega _2^2
\int_\alpha ^\beta  {\int_\gamma ^\delta  {\left( {\frac{{\partial
f}}{{\partial x}}} \right)^2 dydx} }\label{eq2.3}
\end{align}
where, in (\ref{eq2.4}) we integrate by parts, assuming that
$u=g_2\left( {g'_2} \right) $ and $dv = 2\Phi_y' \Phi$. Now, we
also have
\begin{align}
\int_\alpha ^\beta  {\int_\gamma ^\delta  {\left( {\frac{{\partial
f}}{{\partial x}}} \right)^2 dydx} }  &= \int_\alpha ^\beta
{\int_\gamma ^\delta  {\left\{ {\left( {g_1 g_2 \frac{{\partial
h}}{{\partial x}} + g'_1 g_2 h} \right)} \right\}^2 dydx} }
\nonumber\\
&= \int_\alpha ^\beta  {\int_\gamma ^\delta  {\left\{ {\left(
{g'_1 g_2 h} \right)^2 \left( {1 + \frac{{g_1 g_2 \frac{{\partial
h}}{{\partial x}}}}{{g'_1 g_2 h}}} \right)^2 } \right\}dydx} }
\nonumber\\
&\ge \int_\alpha ^\beta  {\int_\gamma ^\delta  {\left\{ {\left(
{g'_1 g_2 h} \right)^2 \left( {1 + 2\frac{{g_1 g_2 \frac{{\partial
h}}{{\partial x}}}}{{g'_1 g_2 h}}} \right)} \right\}dydx} }
\nonumber\\
&= \int_\alpha ^\beta  {\int_\gamma ^\delta  {\left( {g'_1 g_2 h}
\right)^2 dydx} }  + 2\int_\alpha ^\beta  {\int_\gamma ^\delta
{\left( {g'_1 g_2 h} \right) \left( {g_1 g_2 \frac{{\partial
h}}{{\partial x}}} \right)dydx} }
\nonumber\\
&= \left. {\left. {g_2^2 g_1 \left( {g'_1 } \right) h^2 }
\right|_\gamma ^\delta  } \right|_\alpha ^\beta   - \int_\alpha
^\beta  {\int_\gamma ^\delta {\left( {g_1 g''_1 +(g'_1)^2} \right)
g_2^2 h^2 dydx} }
\nonumber\\
&\qquad+ \int_\alpha ^\beta  {\int_\gamma ^\delta  {\left( {g'_1
g_2 h} \right)^2 dydx} }
\nonumber\\
&= \left. {\left. {g_2^2 g_1 \left( {g'_1 } \right)h^2 }
\right|_\gamma ^\delta  } \right|_\alpha ^\beta   - \int_\alpha
^\beta  {\int_\gamma ^\delta  {\left( { - \omega _1^2 g_1^2  +
\left( {g'_1 } \right)^2 } \right)g_2^2 h^2 dydx} }
\nonumber\\
&\qquad+ \int_\alpha ^\beta  {\int_\gamma ^\delta  {\left( {g'_1
g_2 h} \right)^2 dydx} }
\nonumber\\
&= \left. {\left. {g_2^2 g_1 \left( {g'_1 } \right)h^2 }
\right|_\gamma ^\delta  } \right|_\alpha ^\beta   + \omega _1^2
\int_\alpha ^\beta  {\int_\gamma ^\delta  {g_1^2 g_2^2 h^2 dydx} }
- \int_\alpha ^\beta  {\int_\gamma ^\delta  {\left( {g'_1 }
\right)^2 g_2^2 h^2 dydx} }
\nonumber\\
&\qquad+ \int_\alpha ^\beta  {\int_\gamma ^\delta  {\left( {g'_1
g_2 h} \right)^2 dydx} }
\nonumber\\
&= \left. {\left. {g_2^2 g_1 \left( {g'_1 } \right) h^2 }
\right|_\gamma ^\delta  } \right|_\alpha ^\beta   + \omega _1^2
\int_\alpha ^\beta  {\int_\gamma ^\delta  {g_1^2 g_2^2 h^2 dydx}.
}\label{eq2.4}
\end{align}
Substitute (\ref{eq2.4}) in (\ref{eq2.3}), we get
\begin{align*}
\int_\gamma ^\delta  {\int_\alpha ^\beta  {\left( {\frac{{\partial
^2 f}}{{\partial x\partial y}}} \right)^2 dxdy} }  &\ge \left.
{\left. {\left. {\left. {g_2 \left( {g'_2 } \right) \Phi ^2 }
\right|_\gamma ^\delta  } \right|_\alpha ^\beta   + \omega _2^2
g_2^2 g_1 \left( {g'_1 } \right)h^2 } \right|_\gamma ^\delta  }
\right|_\alpha ^\beta   + \omega _1^2 \omega _2^2 \int_\alpha
^\beta  {\int_\gamma ^\delta  {g_1^2 g_2^2 h^2 dydx} }
\\
&= \left. {\left. {\left. {\left. {g_2 \left( {g'_2 } \right)
\frac{{f_x^2 }}{{g_2^2 }}} \right|_\gamma ^\delta  }
\right|_\alpha ^\beta   + \omega _2^2 g_2^2 g_1 \left( {g'_1 }
\right) \frac{{f^2 }}{{g_1^2 g_2^2 }}} \right|_\gamma ^\delta  }
\right|_\alpha ^\beta   + \omega _1^2 \omega _2^2 \int_\alpha
^\beta  {\int_\gamma ^\delta  {g_1^2 g_2^2 \frac{{f^2 }}{{g_1^2
g_2^2 }}dydx} }
\\
&= \left. {\left. {\left. {\left. {\left( {\frac{{g'_2 }}{{g_2 }}}
\right) f_x^2 } \right|_\gamma ^\delta  } \right|_\alpha ^\beta +
\omega _2^2 \left( {\frac{{g'_1 }}{{g_1 }}} \right)f^2 }
\right|_\gamma ^\delta  } \right|_\alpha ^\beta   + \omega _1^2
\omega _2^2 \int_\alpha ^\beta  {\int_\gamma ^\delta {f^2 dydx} }
\end{align*}
Hence,
\begin{multline}
\int_\alpha ^\beta  {\int_\gamma ^\delta {f^2 dydx} }
\\
\le \frac{1}{\omega _1^2 \omega _2^2 } \int_\gamma ^\delta
{\int_\alpha ^\beta  {\left( {\frac{{\partial ^2 f}}{{\partial
x\partial y}}} \right)^2 dxdy} }-\frac{1}{\omega _1^2 \omega _2^2
}\left. {\left. {\left. {\left. {\left( {\frac{{g'_2 }}{{g_2 }}}
\right)f_x^2 } \right|_\gamma ^\delta  } \right|_\alpha ^\beta
-\frac{1}{\omega _1^2 } \left( {\frac{{g'_1 }}{{g_1 }}} \right)
f^2 } \right|_\gamma ^\delta  } \right|_\alpha ^\beta \label{eq.h}
\end{multline}
Now, since
\begin{align*}
0 \le f^2 \left( {\alpha,\gamma}  \right) = \left(
{\int_c^\gamma\int_a^\alpha {f_{ts}\left( t,s \right)dtds} }
\right)^2 \le \left( {\alpha  - a} \right)\left( {\gamma - c}
\right) \int_c^\gamma\int_a^\alpha {f^2_{ts}\left( t,s
\right)dtds}
\end{align*}
then
\begin{align*}
0 \le f^2 \left( {\alpha,\gamma}  \right)  \le \left( {\alpha  -
a} \right)\left( {\gamma - c} \right) \int_c^\gamma\int_a^\alpha
{f^2_{ts}\left( t,s \right)dtds} \longrightarrow 0,
\end{align*}
as $\alpha \longrightarrow a^ +$ and $\gamma \longrightarrow c^
+$, i.e., $f^2(\alpha,\gamma)=0$ and so that
\begin{align*}
\left\{ {\frac{1}{{\omega_1 ^{2} }}\left( {\frac{{g'_1}}{g_1}}
\right) } \right\} \cdot f^2 \left( \alpha,\gamma  \right)
\longrightarrow 0\,\,\,\,\text{as}\,\,\,\,\alpha  \longrightarrow
a^ +\,\,\text{and}\,\,\gamma \longrightarrow c^ +.
\end{align*}
Similarly,
\begin{align*}
0 \le f_x^2 \left( {\alpha,\gamma}  \right) &= \left(
{\int_c^\gamma {f_{xy}\left( \alpha,y \right)-f_{xy}\left( a,y
\right)dy} } \right)^2
\\
&\le \left( {\gamma - c} \right)\int_c^\gamma {\left(f_{xy}\left(
\alpha,y \right)-f_{xy}\left( a,y \right) \right)^2dy}
\end{align*}
then
\begin{align*}
0 \le f_x^2 \left( {\alpha,\gamma}  \right)  \le \left( {\gamma -
c} \right)\int_c^\gamma {\left(f_{xy}\left( \alpha,y
\right)-f_{xy}\left( a,y \right) \right)^2dy} \longrightarrow 0,
\end{align*}
as $\gamma \longrightarrow c^ +$, i.e., $f^2(\alpha,\gamma)=0$ and
so that
\begin{align*}
\left\{ {\frac{1}{{\omega_1 ^{2} \omega_2 ^{2}}}\left(
{\frac{{g'_2}}{g_2}} \right) } \right\} \cdot f_x^2 \left(
\alpha,\gamma \right) \longrightarrow
0\,\,\,\,\text{as}\,\,\,\,\alpha \longrightarrow a^
+\,\,\text{and}\,\,\gamma \longrightarrow c^ +.
\end{align*}
Then, from (\ref{eq.h}) it follows
\begin{align*}
&\int_\alpha ^\beta  {\int_\gamma ^\delta {f^2 dydx} }
\\
&\le \frac{1}{\omega _1^2 \omega _2^2 } \int_\gamma ^\delta
{\int_\alpha ^\beta  {\left( {\frac{{\partial ^2 f}}{{\partial
x\partial y}}} \right)^2 dxdy} }-\frac{1}{\omega _1^2 \omega _2^2
}\left. {\left. {\left. {\left. {\left( {\frac{{g'_2 }}{{g_2 }}}
\right) f_x^2 } \right|_\gamma ^\delta  } \right|_\alpha ^\beta
-\frac{1}{\omega _1^2 } \left( {\frac{{g'_1 }}{{g_1 }}} \right)
f^2 } \right|_\gamma ^\delta  } \right|_\alpha ^\beta
\\
&\le \frac{1}{\omega _1^2 \omega _2^2 } \int_\gamma ^\delta
{\int_\alpha ^\beta  {\left( {\frac{{\partial ^2 f}}{{\partial
x\partial y}}} \right)^2 dxdy} }
\end{align*}
where $a<\beta<b$ and $c<\delta<d$. Now let $\alpha\longrightarrow
a^{+}$, $\beta\longrightarrow b^{-}$  and $\gamma\longrightarrow
c^{+}$, $\delta\longrightarrow d^{-}$ to obtain the inequality
(\ref{eq.thm1}).

To obtain the sharpness, assume that (\ref{eq.thm1}) holds with
another constant $K>0$,
\begin{align}
\label{eq2.8} \int_c^d{\int_a^b {f^2\left( {x,y} \right)dx}dy} \le
K\left( {b - a} \right)^2 \left( {d - c} \right)^2
\int_c^d{\int_a^b {\left( {\frac{{\partial ^2 f}}{{\partial
x\partial y}}} \right)^2}dxdy}.
\end{align}
Define the function $f:\left[{a,b}\right)\times \left[{c,d}\right)
\to \mathbb{R}$, given by
\begin{align*}
f\left({x,y}\right)=C\sin\left({\frac{\pi}{2}\cdot\frac{x-a}{b-a}}\right)
\sin\left({\frac{\pi}{2}\cdot\frac{y-c}{d-c}}\right),
\end{align*}
therefore, we have $$ \frac{{\partial ^2 f}}{{\partial x\partial
y}} = \frac{{\pi ^2 }}{{4\left( {b - a} \right)\left( {d - c}
\right)}}\cos \left( {\frac{\pi }{2} \cdot \frac{{x - a}}{{b -
a}}} \right)\cos \left( {\frac{\pi }{2} \cdot \frac{{y - c}}{{d -
c}}} \right), $$ $\int_a^b {\int_c^d {f^2 \left( {x,y}
\right)dydx} }  = \frac{{\left( {b - a} \right)\left( {d - c}
\right)}}{4} $, and $\int_a^b {\int_c^d {\left( {\frac{{\partial
^2 f}}{{\partial x\partial y}}} \right)^2 dydx} } =\frac{{\pi ^4
}}{{64\left( {b - a} \right)\left( {d - c} \right)}}$, substitute
in (\ref{eq2.8})
\begin{align*}
\frac{{\left( {b - a} \right)\left( {d - c} \right)}}{4} \le
K\left( {b - a} \right)^2 \left( {d - c} \right)^2\frac{{\pi ^4
}}{{64\left( {b - a} \right)\left( {d - c} \right)}},
\end{align*}
which means that $K\ge \frac{16}{\pi^4}$, thus the constant
$\frac{16}{\pi^4}$ is the best possible and the inequality is
sharp (\ref{eq.thm1}).
\end{proof}

\begin{corollary}
\label{cor1}If $f\in \mathfrak{L}^2(I_-)$, then the inequality
(\ref{eq.thm1}) still holds, and the inequality is sharp.
\end{corollary}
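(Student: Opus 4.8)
The plan is to reduce the corollary to Theorem \ref{thm1} by a reflection of the rectangle that interchanges the two ``corners'' $(a,c)$ and $(b,d)$, thereby converting the boundary data prescribed at $b,d$ into boundary data at $a,c$. Concretely, I would introduce the affine change of variables $u=a+b-x$, $v=c+d-y$ and define $g(u,v):=f(a+b-u,\,c+d-v)$ for $(u,v)\in I^{-}=[a,b)\times[c,d)$. Since the reflection is an affine diffeomorphism, $g$ inherits absolute continuity from $f$, and as $u$ ranges over $[a,b)$ the argument $a+b-u$ ranges over $(a,b]$ (and similarly in the second variable), so $g$ is well defined on $I^{-}$ whenever $f$ is defined on $I_{-}$.

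The first task is to verify that $g\in\mathfrak{L}^2(I^{-})$, i.e.\ that $g$ satisfies at $a,c$ exactly the boundary conditions demanded by Theorem \ref{thm1}. These follow directly from the hypotheses on $f\in\mathfrak{L}^2(I_-)$ at $b,d$: for instance $g(a,v)=f(b,c+d-v)=0$ and $g(u,c)=f(a+b-u,d)=0$, while for the first partials $g_u(a,\cdot)=-f_x(b,\cdot)=0$ and $g_v(\cdot,c)=-f_y(\cdot,d)=0$, and likewise for the remaining conditions. The decisive computation is the behaviour of the mixed derivative: by the chain rule each of the two differentiations produces a factor $-1$, and these cancel, so that
\begin{align*}
\frac{\partial^2 g}{\partial u\,\partial v}(u,v)=\frac{\partial^2 f}{\partial x\,\partial y}(a+b-u,\,c+d-v).
\end{align*}
Thus the integrand $\left(\partial_{uv}^2 g\right)^2$ evaluated at $(u,v)$ equals $\left(\partial_{xy}^2 f\right)^2$ evaluated at the reflected point.

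Next I would apply the change of variables $x=a+b-u$, $y=c+d-v$, whose Jacobian has absolute value $1$ and which maps $[a,b]\times[c,d]$ onto itself. This yields the invariance
\begin{align*}
\int_c^d\!\int_a^b g^2(u,v)\,du\,dv=\int_c^d\!\int_a^b f^2(x,y)\,dx\,dy,
\end{align*}
and, using the identity above, also $\int_c^d\int_a^b(\partial_{uv}^2 g)^2\,du\,dv=\int_c^d\int_a^b(\partial_{xy}^2 f)^2\,dx\,dy$. Applying Theorem \ref{thm1} to $g$, and noting that the factors $b-a$ and $d-c$ are unchanged by the reflection, delivers inequality \eqref{eq.thm1} for $f$ with the same constant $\tfrac{16}{\pi^4}(b-a)^2(d-c)^2$.

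Finally, for sharpness I would observe that the reflection is measure preserving and leaves the constant untouched, so the extremal of Theorem \ref{thm1} reflects to an extremal for $I_-$; explicitly one may take
\begin{align*}
f(x,y)=C\cos\!\left(\frac{\pi}{2}\cdot\frac{x-a}{b-a}\right)\cos\!\left(\frac{\pi}{2}\cdot\frac{y-c}{d-c}\right),
\end{align*}
which satisfies the $I_-$ boundary conditions (it vanishes at $x=b$ and at $y=d$) and, by the same computation as in Theorem \ref{thm1}, turns \eqref{eq.thm1} into an equality. The step requiring the most care is the sign bookkeeping in the mixed derivative together with the verification that every boundary condition defining $\mathfrak{L}^2(I^-)$ is met by $g$; once the two sign flips are seen to cancel, the remainder is a routine invocation of the measure-preserving change of variables.
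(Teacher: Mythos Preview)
Your argument is correct, but it takes a different route from the paper. The paper simply indicates that one should rerun the entire computation of Theorem~\ref{thm1} with the auxiliary functions replaced by $g_1(x)=\sin\omega_1(b-x)$ and $g_2(y)=\sin\omega_2(d-y)$, i.e.\ sines anchored at the right endpoints rather than the left ones; the sharpness witness is then
\[
f(x,y)=C\sin\!\left(\frac{\pi}{2}\cdot\frac{b-x}{b-a}\right)\sin\!\left(\frac{\pi}{2}\cdot\frac{d-y}{d-c}\right),
\]
which, by the identity $\sin(\tfrac{\pi}{2}-\theta)=\cos\theta$, is exactly your cosine function. Your reflection $u=a+b-x$, $v=c+d-v$ is a cleaner way to reach the same conclusion: it converts the $I_-$ boundary data into $I^-$ data, the two sign flips in $\partial_{uv}^2 g$ cancel, the Jacobian has modulus $1$, and Theorem~\ref{thm1} applies verbatim to $g$. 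This buys you the result without repeating the long integration-by-parts calculation, and it makes transparent why the constant is identical. The paper's approach, by contrast, is more self-contained in that it does not rely on Theorem~\ref{thm1} as a black box, but at the cost of tacitly redoing all of its steps.
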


\begin{proof}
The proof goes likewise the proof of Theorem \ref{thm1}, with few
changes in the auxiliary function `$\sin$' in both variable $x$
and $y$ defined on the bidimensional interval $I_-$. To obtain the
sharpness, define the function $f:\left({a,b}\right]\times
\left({c,d}\right] \to \mathbb{R}$, given by
\begin{align*}
f\left({x,y}\right)=C\sin\left({\frac{\pi}{2}\cdot\frac{b-x}{b-a}}\right)
\sin\left({\frac{\pi}{2}\cdot\frac{d-y}{d-c}}\right),
\end{align*}
where $C$ is constant.
\end{proof}

\begin{corollary}
\label{cor2}Let $f \in \mathfrak{L}^2(I)$. Under the assumptions
of Theorem \ref{thm1} and Corollary \ref{cor1} together, the
inequality
\begin{multline}
 \int_c^d {\int_a^b {\left| {f\left( {x,y} \right)
- f\left( {\xi ,\eta } \right)} \right|^2 dxdy} }
\\
\le \frac{{16}}{{\pi ^4 }} \left[ {\frac{{b - a}}{2} + \left| {\xi
- \frac{{a + b}}{2}} \right|} \right]^2 \left[ {\frac{{d - c}}{2}
+ \left| {\eta  - \frac{{c + d}}{2}} \right|} \right]^2
\int_c^d{\int_a^b {\left( {\frac{{\partial ^2 f}}{{\partial
x\partial y}}} \right)^2}dxdy} \label{eq2.8}
\end{multline}
is valid for all $(\xi,\eta) \in \mathbb{D^{\circ}}$. The constant
$\frac{1}{\pi^4}$ is the best possible.
\end{corollary}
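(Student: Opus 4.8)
The plan is to imitate the one-dimensional Diaz--Metcalf argument behind (\ref{eq1.4}): split the rectangle at the interior point $(\xi,\eta)$ into the four sub-rectangles
$$Q_1=[a,\xi]\times[c,\eta],\quad Q_2=[\xi,b]\times[c,\eta],\quad Q_3=[a,\xi]\times[\eta,d],\quad Q_4=[\xi,b]\times[\eta,d],$$
and to estimate $\int_{Q_i}|f-f(\xi,\eta)|^2\,dx\,dy$ on each piece separately. Because subtracting the constant $f(\xi,\eta)$ leaves the mixed derivative unchanged, every local estimate will carry only $\int_{Q_i}(\partial^2 f/\partial x\partial y)^2$ on its right-hand side, so that summing the four contributions reconstitutes the full integral $\int_c^d\int_a^b(\partial^2 f/\partial x\partial y)^2\,dx\,dy$.

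The constant is then pure bookkeeping. First I would record the elementary identity
$$\max\{\xi-a,\ b-\xi\}=\frac{b-a}{2}+\Big|\xi-\frac{a+b}{2}\Big|,$$
together with its analogue in the $y$-variable; these are precisely the factors appearing in (\ref{eq1.4}). On $Q_i$ the relevant Wirtinger-type inequality supplies the constant $\frac{16}{\pi^4}(\text{horizontal side})^2(\text{vertical side})^2$; replacing each horizontal side by $\max\{\xi-a,b-\xi\}$ and each vertical side by $\max\{\eta-c,d-\eta\}$ and pulling out this common maximal constant yields exactly $\frac{16}{\pi^4}\big[\frac{b-a}{2}+|\xi-\frac{a+b}{2}|\big]^2\big[\frac{d-c}{2}+|\eta-\frac{c+d}{2}|\big]^2$, as required.

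It remains to apply the correct Wirtinger-type inequality on each $Q_i$. On $Q_4$ the inner corner $(\xi,\eta)$ is the lower-left (anchoring) corner of Theorem \ref{thm1}, on $Q_1$ it is the upper-right corner handled by Corollary \ref{cor1}, and $Q_2,Q_3$ require the two mixed variants (sine in one variable, its reflection in the other), each proved verbatim by reflecting the auxiliary functions $g_1,g_2$ used in Theorem \ref{thm1}. The main obstacle, and the step needing genuine care, is the verification of the boundary hypotheses along the two edges meeting at $(\xi,\eta)$: in contrast with the one-dimensional situation, where $f(x)-f(t)$ truly vanishes at the single split point, here $f-f(\xi,\eta)$ vanishes only at the point $(\xi,\eta)$ and not along the segments $x=\xi$ or $y=\eta$, so neither the integral representation nor the decay of the boundary terms exploited in Theorem \ref{thm1} is automatic. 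I expect to circumvent this by working with the mixed difference $f(x,y)-f(\xi,y)-f(x,\eta)+f(\xi,\eta)$, which does vanish (together with its tangential derivative) along both inner edges and shares the mixed partial of $f$, and then controlling its discrepancy from $f-f(\xi,\eta)$ by means of the joint vanishing hypotheses of Theorem \ref{thm1} and Corollary \ref{cor1} on the outer boundary.

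Finally, to see that the constant cannot be lowered I would test the inequality with the extremal concentrated on the sub-rectangle carrying the maximal constant, built from the quarter-sine profiles that already saturate Theorem \ref{thm1} and Corollary \ref{cor1}; for instance, on $Q_4$ the profile $C\sin\big(\tfrac{\pi}{2}\cdot\tfrac{x-\xi}{b-\xi}\big)\sin\big(\tfrac{\pi}{2}\cdot\tfrac{y-\eta}{d-\eta}\big)$ and its reflections, pieced together so as to meet the joint boundary conditions, force the ratio of the two sides to approach $\frac{16}{\pi^4}$ times the product of the two bracketed maxima, which establishes sharpness.
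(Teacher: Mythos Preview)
Your plan is the paper's: split the rectangle at $(\xi,\eta)$ into the four sub-rectangles, invoke the appropriate variant of Theorem~\ref{thm1}/Corollary~\ref{cor1} on each piece, majorize every local side-length by the corresponding maximum via the identity $\max\{\xi-a,b-\xi\}=\tfrac{b-a}{2}+|\xi-\tfrac{a+b}{2}|$, sum, and for sharpness test against a piecewise quarter-sine extremal. The paper's proof is precisely this sketch---it writes the four-term decomposition, says ``apply Theorem~\ref{thm1} and Corollary~\ref{cor1}'' to $h=|f-f(\xi,\eta)|^2$, and then displays a piecewise extremal (with unit-step cutoffs selecting the dominant sub-rectangle)---with no further justification.

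Where you are more scrupulous than the paper is in flagging that $f-f(\xi,\eta)$ does not vanish along the interior segments $x=\xi$ and $y=\eta$, so the edge hypotheses underlying Theorem~\ref{thm1} are not literally available on each $Q_i$. The paper passes over this in silence. Your proposed detour through the mixed difference $f(x,y)-f(\xi,y)-f(x,\eta)+f(\xi,\eta)$, which vanishes on both inner edges and shares $f_{xy}$ with $f$, is the natural repair; just be aware that handling the residual one-variable pieces $f(\xi,y)-f(\xi,\eta)$ and $f(x,\eta)-f(\xi,\eta)$ while preserving the sharp constant is where the genuine work sits, and the paper offers no guidance there either. In short: same strategy, with you identifying and attempting to patch a gap the paper leaves open.
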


\begin{proof}
Applying Theorem \ref{thm1} and Corollary \ref{cor2} on the right
hand side of the equation
\begin{multline*}
\int_c^d {\int_a^b {\left| {f\left( {x,y} \right) - f\left( {\xi
,\eta } \right)} \right|^2 dxdy} }
\\
= \int_c^\eta  {\int_a^{\xi} {\left| {f\left( {x,y} \right) -
f\left( {\xi ,\eta } \right)} \right|^2 dxdy} } + \int_{\eta}^d
{\int_a^{\xi} {\left| {f\left( {x,y} \right) - f\left( {\xi ,\eta
} \right)} \right|^2 dxdy} }
\\
+ \int_c^{\eta}  {\int_{\xi}^b {\left| {f\left( {x,y} \right) -
f\left( {\xi ,\eta } \right)} \right|^2 dxdy} }  + \int_{\eta}^d
{\int_\xi^b {\left| {f\left( {x,y} \right) - f\left( {\xi ,\eta }
\right)} \right|^2 dxdy} }
\end{multline*}
and the make the substitution $h(x,y)= \left| {f\left( {x,y}
\right) - f\left( {\xi ,\eta } \right)} \right|^2$. To obtain the
sharpness define
 $f:\left[{a,b}\right]\times
\left[{c,d}\right] \to \mathbb{R}$, given by
\begin{align*}
f\left( x,y \right) = K_0  + \left\{ \begin{array}{l}
 K_1 \sin  \left( {\frac{{\pi }}{2} \cdot \frac{{a+\xi  - 2x}}{{\xi  - a}}} \right)\sin  \left( {\frac{{\pi }}{2} \cdot \frac{{c+\eta  - 2y}}{{\eta  - c}}} \right)u_\xi \left( \tau \right)u_\eta \left( \psi \right),\,\,\,\,\,\,\,\,\,a \le x \le \xi, c \le y \le \eta  \\
  \\
 K_2 \sin  \left( {\frac{{\pi }}{2} \cdot \frac{{2x - \xi -b}}{{b - \xi }}} \right)\sin  \left( {\frac{{\pi }}{2} \cdot \frac{{c+\eta  - 2y}}{{\eta  - c}}} \right)u_\xi \left( { - \tau } \right)u_\eta \left( \psi \right),\,\,\,\,\,\xi  \le x \le b, c \le y \le \eta \\
\\
 K_3 \sin  \left( {\frac{{\pi }}{2} \cdot \frac{{a+\xi  - 2x}}{{\xi  - a}}} \right)\sin  \left( {\frac{{\pi }}{2} \cdot \frac{{2y - \eta -d}}{{d - \xi }}} \right)u_\xi \left( \tau \right)u_\eta \left( { - \psi } \right),\,\,\,\,\,\,\,a \le x \le \xi, \eta  \le x \le d  \\
  \\
 K_4 \sin  \left( {\frac{{\pi }}{2} \cdot \frac{{2x - \xi -b}}{{b - \xi }}} \right)\sin  \left( {\frac{{\pi }}{2} \cdot \frac{{2y - \eta -d}}{{d - \eta }}} \right)u_\xi \left( { - \tau } \right)u_\eta \left( { - \psi } \right),\,\,\,\,\xi  \le x \le b, \eta  \le x \le d \\
 \end{array} \right.
\end{align*}
where $K_0$, $K_1$, $K_2$, $K_3$ and  $K_2$ are arbitrary
constants, $\tau  = 2\xi  - a - b$, $\psi  = 2\eta  - c - d$ and
$u_s  \left( t \right)$ is the unit step function.
\end{proof}

\section{Sharp bounds for the \v{C}eby\v{s}ev functional}\label{sec3}

The \emph{\v{C}eby\v{s}ev functional}
\begin{align}
\label{eq3.1}\mathcal{T}\left( {f,g} \right) &:= \frac{1}{{\left(
{b - a} \right)\left( {d - c} \right)}}\int_a^b {\int_c^d {f\left(
{t,s} \right)g\left( {t,s} \right)dsdt} }
\nonumber\\
&\qquad- \frac{1}{{\left( {b - a} \right)\left( {d - c}
\right)}}\int_a^b {\int_c^d {f\left( {t,s} \right)dsdt} }
\frac{1}{{\left( {b - a} \right)\left( {d - c} \right)}}\int_a^b
{\int_c^d {g\left( {t,s} \right)dtds} }
\end{align}
has interesting applications in the approximation of the integral
of a product as pointed out in the references below.

In order to represent the remainder of the Taylor formula in an
integral form which will allow a better estimation using the
Gr\"{u}ss type inequalities, Hanna et al. \cite{Hanna4},
generalized the Korkine identity  for double integrals and
therefore Gr\"{u}ss type inequalities were proved.

In 2002, Pachpatte \cite{Pachpatte} has established two
inequalities of Gr\"{u}ss type involving continuous functions of
two independent variables whose first and second partial
derivatives are exist, continuous and belong to
$L_{\infty}(\mathbb{D})$; for details see \cite{Pachpatte}. For
more results about multivariate and multidimensional Gr\"{u}ss
type inequalities the reader may refer to
\cite{Anastassiou1}--\cite{Barnett}, \cite{Hanna1}--\cite{Hanna4}
and \cite{Pachpatte1}.

Recently, the author of this paper \cite{alomari} established
various inequalities of Gr\"{u}ss type for functions of two
variables under various assumptions of the functions involved.

In viewing of Corollary \ref{cor2}, we may state the following
result.
\begin{theorem}
\label{thm5} If $f,g \in \mathfrak{L}^2(\mathbb{D})$, then
\begin{align}
\label{eq3.7}\left| {\mathcal{T}\left( {f,g} \right)} \right|
\le\frac{{1}}{{\pi ^4 }}\left( {b - a} \right)^2 \left( {d - c}
\right)^2 \left\|{\frac{{\partial ^2 f}}{{\partial x\partial
y}}}\right\|_2\left\|{\frac{{\partial ^2 g}}{{\partial x\partial
y}}}\right\|_2,
\end{align}
$\frac{1}{\pi^4}$ is the best possible.
\end{theorem}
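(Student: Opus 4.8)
The plan is to reduce the estimate to Corollary~\ref{cor2} by means of the pre-\v{C}eby\v{s}ev (Korkine) representation of $\mathcal{T}$ followed by the Cauchy--Schwarz inequality. Write $M(f):=\frac{1}{(b-a)(d-c)}\int_a^b\int_c^d f(t,s)\,ds\,dt$ for the integral mean of $f$ over $\mathbb{D}$, and likewise $M(g)$. First I would verify the identity
\[
\mathcal{T}(f,g)=\frac{1}{(b-a)(d-c)}\int_a^b\int_c^d\bigl(f(t,s)-M(f)\bigr)\bigl(g(t,s)-M(g)\bigr)\,ds\,dt,
\]
which is immediate on expanding the integrand and matching the three resulting mean terms with the definition \eqref{eq3.1}. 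Applying Cauchy--Schwarz to the right-hand side then gives
\[
\bigl|\mathcal{T}(f,g)\bigr|\le\frac{1}{(b-a)(d-c)}\,\bigl\|f-M(f)\bigr\|_2\,\bigl\|g-M(g)\bigr\|_2,
\]
so the whole problem is reduced to estimating the centred norm $\|f-M(f)\|_2$ and its analogue for $g$.

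For that estimate I would use the variational property of the mean: among all constants $\kappa$ the integral $\int_c^d\int_a^b(f-\kappa)^2\,dx\,dy$ is minimised at $\kappa=M(f)$, so for any fixed $(\xi,\eta)\in\mathbb{D}^{\circ}$ one has $\bigl\|f-M(f)\bigr\|_2^2\le\int_c^d\int_a^b|f(x,y)-f(\xi,\eta)|^2\,dx\,dy$. Corollary~\ref{cor2} bounds the latter, and its bracketed factors $\bigl[\tfrac{b-a}{2}+|\xi-\tfrac{a+b}{2}|\bigr]^2\bigl[\tfrac{d-c}{2}+|\eta-\tfrac{c+d}{2}|\bigr]^2$ are smallest at the centre $(\xi,\eta)=\bigl(\tfrac{a+b}{2},\tfrac{c+d}{2}\bigr)$, where they collapse to $\bigl(\tfrac{b-a}{2}\bigr)^2\bigl(\tfrac{d-c}{2}\bigr)^2$. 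This yields
\[
\bigl\|f-M(f)\bigr\|_2\le\frac{(b-a)(d-c)}{\pi^2}\Bigl\|\tfrac{\partial^2 f}{\partial x\partial y}\Bigr\|_2,
\]
and identically for $g$.

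Substituting both centred-norm bounds into the Cauchy--Schwarz estimate cancels one factor $(b-a)(d-c)$ against the prefactor $\tfrac{1}{(b-a)(d-c)}$ and leaves $\tfrac{1}{\pi^4}$ times the surviving powers of $(b-a)$ and $(d-c)$, which is \eqref{eq3.7}. The step I expect to need the most care is precisely this bookkeeping of the length factors: it is the combination of the centred-mean reduction and the midpoint choice that converts the Wirtinger constant $\tfrac{16}{\pi^4}$ of Corollary~\ref{cor2} into the \v{C}eby\v{s}ev constant $\tfrac{1}{\pi^4}$, and the exponents of $(b-a)$ and $(d-c)$ that remain after the cancellation should be tracked with attention. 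For the sharpness of $\tfrac{1}{\pi^4}$ I would take $g=f$ equal to the midpoint-centred extremal configuration appearing in the sharpness proof of Corollary~\ref{cor2}: with $g=f$ the Cauchy--Schwarz step is an equality, while the underlying Wirtinger estimate is attained by its sine eigenfunction, so computing $\mathcal{T}(f,f)$ directly against the two norms returns the ratio exactly $\tfrac{1}{\pi^4}$, showing that no smaller constant is admissible.
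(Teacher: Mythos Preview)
Your proposal is correct and follows essentially the same route as the paper: reduce to Corollary~\ref{cor2} at the midpoint via Cauchy--Schwarz, and verify sharpness with the product-sine function $f=g=\sin\bigl(\tfrac{\pi}{2}\cdot\tfrac{a+b-2x}{b-a}\bigr)\sin\bigl(\tfrac{\pi}{2}\cdot\tfrac{c+d-2y}{d-c}\bigr)$. The only cosmetic difference is in the ordering: the paper first bounds $\mathcal{T}(f,f)$ by writing it as $\tfrac{1}{\Delta}\int\!\!\int\bigl(f-f(\tfrac{a+b}{2},\tfrac{c+d}{2})\bigr)\bigl(f-M(f)\bigr)$, applies Cauchy--Schwarz to this, recognises one factor as $\mathcal{T}(f,f)$ itself and cancels, and then invokes $|\mathcal{T}(f,g)|\le\mathcal{T}(f,f)^{1/2}\mathcal{T}(g,g)^{1/2}$; your use of the variational property of the mean to reach $\|f-M(f)\|_2^2\le\int\!\!\int|f-f(\xi,\eta)|^2$ is a cleaner way to arrive at the same intermediate inequality.
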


\begin{proof}
By the triangle inequality and then using the Cauchy-Schwartz
inequality, we get
\begin{multline}
\left| {\mathcal{T}\left( {f,f} \right)} \right|^2
\\
= \frac{1}{{\Delta^2}}\left| {\int_c^d {\int_a^b {\left[ {f\left(
{x,y} \right) - f\left( {\frac{{a + b}}{2},\frac{{c + d}}{2}}
\right)} \right]\left[ {f\left( {x,y} \right) -
\frac{1}{{\Delta}}\int_c^d {\int_a^b {f\left( {t,s} \right)dtds} }
} \right]} dx}dy } \right|^2.\label{eq3.8}
\\
\le \frac{1}{{\Delta}}\int_c^d {\int_a^b {\left[ {f\left( {x,y}
\right) - f\left( {\frac{{a + b}}{2},\frac{{c + d}}{2}} \right)}
\right]^2 dxdy} }
\\
\times\frac{1}{{\Delta}}\int_c^d {\int_a^b {\left[ {f\left( {x,y}
\right) -\frac{1}{{\Delta}} \int_c^d {\int_a^b {f\left( {t,s}
\right)dtds} } } \right]^2 dxdy} }.
\end{multline}
Now, since
\begin{align*}
\mathcal{T}\left( {f,f} \right)&= \frac{1}{{\Delta}}\int_c^d
{\int_a^b {\left( {f\left( {x,y} \right) -
\frac{1}{{\Delta}}\int_c^d {\int_a^b {f\left( {t,s} \right)dtds} }
} \right)^2 dx}dy}
\\
&= \frac{1}{{\Delta}}\int_c^d \int_a^b  \left[ {f^2\left( {x,y}
\right)-2 f\left( {x,y} \right) \frac{1}{{\Delta}} \int_c^d
{\int_a^b {f\left( {t,s} \right)dt} ds} +\left(
{\frac{1}{{\Delta}}\int_c^d {\int_a^b {f\left( {t,s} \right)dtds}
} } \right)^2} \right]dxdy
\\
&= \frac{1}{{\Delta}}\int_c^d {\int_a^b { f^2\left( {x,y}
\right)}dxdy} - \left( {\frac{1}{{\Delta}}\int_c^d {\int_a^b
{f\left( {t,s} \right)dtds} } } \right)^2,
\end{align*}
where $\Delta:= (b-a)(d-c)$.

Therefore, from (\ref{eq3.8})
\begin{align*}
\left| {\mathcal{T}\left( {f,f} \right)} \right|^2 &\le
\frac{1}{{\Delta}}\int_c^d {\int_a^b {\left[ {f\left( {x,y}
\right) - f\left( {\frac{{a + b}}{2},\frac{{c + d}}{2}} \right)}
\right]^2 dxdy} }
\\
&\qquad\times\frac{1}{{\Delta}}\int_c^d {\int_a^b {\left[ {f\left(
{x,y} \right) - \frac{1}{{\Delta}}\int_c^d {\int_a^b {f\left(
{t,s} \right)dtds} } } \right]^2 dxdy} }
\\
&=\frac{1}{{\Delta}}\int_c^d {\int_a^b {\left[ {f\left( {x,y}
\right) - f\left( {\frac{{a + b}}{2},\frac{{c + d}}{2}} \right)}
\right]^2 dxdy} } \times \mathcal{T}\left( {f,f} \right).
\end{align*}
Therefore,
\begin{align*}
\mathcal{T}\left( {f,f} \right) \le \frac{1}{{\Delta}}\int_c^d
{\int_a^b {\left[ {f\left( {x,y} \right) - f\left( {\frac{{a +
b}}{2},\frac{{c + d}}{2}} \right)} \right]^2 dxdy} }.
\end{align*}
Applying (\ref{eq2.8}), we get
\begin{align*}
\int_c^d {\int_a^b {\left[ {f\left( {x,y} \right) - f\left(
{\frac{{a + b}}{2},\frac{{c + d}}{2}} \right)} \right]^2 dxdy} }
\le \frac{{1}}{{\pi ^4 }} \left( {b - a} \right)^2 \left( {d - c}
\right)^2 \left\|{\frac{{\partial ^2f}}{{\partial x\partial
y}}}\right\|^2_2.
\end{align*}
Thus,
\begin{align}
\mathcal{T}\left( {f,f} \right) \le \frac{{1}}{{\pi ^4 }}\left( {b
- a} \right)^2 \left( {d - c} \right)^2 \left\|{\frac{{\partial ^2
f}}{{\partial x\partial y}}}\right\|^2_2.\label{eq3.9}
\end{align}
In a similar argument we can observe that
\begin{align}
\mathcal{T}\left( {g,g} \right) \le \frac{{1}}{{\pi ^4 }}\left( {b
- a} \right)^2 \left( {d - c} \right)^2 \left\|{\frac{{\partial ^2
g}}{{\partial x\partial y}}}\right\|^2_2.\label{eq3.10}
\end{align}
Finally, since
\begin{align*}
\left|{\mathcal{T}\left( {f,g} \right)}\right| \le
\mathcal{T}^{1/2}\left( {f,f} \right)\mathcal{T}^{1/2}\left(
{g,g}\right) \le\frac{{1}}{{\pi ^4 }}\left( {b - a} \right)^2
\left( {d - c} \right)^2 \left\|{\frac{{\partial ^2 f}}{{\partial
x\partial y}}}\right\|_2\left\|{\frac{{\partial ^2 g}}{{\partial
x\partial y}}}\right\|_2.
\end{align*}
which proves (\ref{eq3.7}). To obtain the sharpness, assume that
(\ref{eq3.7}) holds with another constant $K>0$,
\begin{align}
\label{eq3.11} \left| {\mathcal{T}\left( {f,g} \right)} \right|\le
K\left( {b - a} \right)^2 \left( {d - c} \right)^2
\left\|{\frac{{\partial ^2 f}}{{\partial x\partial
y}}}\right\|_2\left\|{\frac{{\partial ^2 g}}{{\partial x\partial
y}}}\right\|_2.
\end{align}
Define the functions $f,g:\mathbb{D} \to \mathbb{R}$, given by
\begin{align*}
f\left({x,y}\right)=\sin \left( {\frac{\pi }{2} \cdot \frac{{
a+b-2x}}{{b - a}}} \right)\sin \left( {\frac{\pi }{2} \cdot
\frac{{c+d-2y}}{{d - c}}} \right)=g\left({x,y}\right),
\end{align*}
therefore, we have
\begin{align*}
\frac{{\partial ^2 f}}{{\partial x\partial y}} = \frac{{\pi ^2
}}{{\left( {b - a} \right)\left( {d - c} \right)}}\cos \left(
{\frac{\pi }{2} \cdot \frac{{a+b-2x}}{{b - a}}} \right)\cos \left(
{\frac{\pi }{2} \cdot \frac{{c+d-2y }}{{d - c}}}
\right)=\frac{{\partial ^2 g}}{{\partial x\partial y}},
\end{align*}
\begin{align*}
\int_a^b {\int_c^d {f \left( {x,y} \right)g \left( {x,y}
\right)dydx} }  = \frac{{\left( {b - a} \right)\left( {d - c}
\right)}}{4},
\end{align*}
and
\begin{align*}
\int_a^b {\int_c^d {\left( {\frac{{\partial ^2 f}}{{\partial
x\partial y}}} \right)^2 dydx} } = \frac{{\left( {b - a}
\right)\left( {d - c} \right)}}{4}=\int_a^b {\int_c^d {\left(
{\frac{{\partial ^2 g}}{{\partial x\partial y}}} \right)^2 dydx}}.
\end{align*}
Substituting in (\ref{eq3.11})
\begin{align*}
\frac{{\left( {b - a} \right)\left( {d - c} \right)}}{4} \le K
\pi^4\frac{{\left( {b - a} \right)\left( {d - c} \right)}}{4},
\end{align*}
which means that $K\ge \frac{1}{\pi^4}$, thus the constant
$\frac{1}{\pi^4}$ is the best possible and the inequality is sharp
(\ref{eq3.7}).
\end{proof}

\begin{theorem}
\label{thm6}Let $f\in\mathfrak{L}^2(\mathbb{D})$  and $g :
\mathbb{D} \to \mathbb{R}$ satisfies that there exists the real
numbers $\gamma,\Gamma$ such that $\gamma \le g(x,y) \le \Gamma$
for all $(x,y)\in \mathbb{D}$, then
\begin{align}
\label{eq3.12}\left| {\mathcal{T}\left( {f,g} \right)} \right| \le
\frac{4}{{\pi ^2 }}\left( {b - a} \right)^{1/2} \left( {d - c}
\right)^{1/2}\left( {\Gamma - \gamma}
\right)\left\|{\frac{{\partial ^2 f}}{{\partial x\partial
y}}}\right\|_2.
\end{align}
The constant $\frac{4}{\pi^2}$ is the best possible.
\end{theorem}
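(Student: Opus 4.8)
The plan is to treat Theorem \ref{thm6} as a ``premature Gr\"{u}ss'' companion of Theorem \ref{thm5}: I retain the Cauchy--Schwarz splitting of the \v{C}eby\v{s}ev functional but estimate its two factors by different means, asking only boundedness of $g$ and square integrability of the mixed derivative of $f$. Concretely, the first step is to recall the pre-Gr\"{u}ss inequality
$$\left|\mathcal{T}(f,g)\right| \le \mathcal{T}^{1/2}(f,f)\,\mathcal{T}^{1/2}(g,g),$$
which is exactly the Cauchy--Schwarz step already used to derive (\ref{eq3.7}). The factor $\mathcal{T}^{1/2}(f,f)$ is then controlled by taking square roots in the estimate for $\mathcal{T}(f,f)$ established in the proof of Theorem \ref{thm5} (equivalently, Corollary \ref{cor2} evaluated at the centre $(\xi,\eta)=(\tfrac{a+b}{2},\tfrac{c+d}{2})$ together with the normalisation $\tfrac1\Delta$ in $\mathcal{T}$), so that all information about $f$ is reduced to $\big\|\frac{\partial^2 f}{\partial x\partial y}\big\|_2$.

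The genuinely new ingredient is a sharp bound on $\mathcal{T}(g,g)$ using only the two-sided bound $\gamma\le g\le\Gamma$. For this I would integrate the pointwise inequality $(\Gamma-g)(g-\gamma)\ge 0$ over $\mathbb{D}$. Writing $\Delta=(b-a)(d-c)$ and $\bar g=\tfrac1\Delta\int_c^d\int_a^b g\,dx\,dy$, and recalling the identity $\mathcal{T}(g,g)=\tfrac1\Delta\int_c^d\int_a^b g^2\,dx\,dy-\bar g^{\,2}$ computed in the proof of Theorem \ref{thm5}, expansion and division by $\Delta$ turn this into
$$\mathcal{T}(g,g) \le (\Gamma-\bar g)(\bar g-\gamma) \le \Big(\tfrac{(\Gamma-\bar g)+(\bar g-\gamma)}{2}\Big)^2 = \frac{(\Gamma-\gamma)^2}{4},$$
the middle step being the arithmetic--geometric mean estimate. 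Hence $\mathcal{T}^{1/2}(g,g)\le\tfrac12(\Gamma-\gamma)$, and substituting both factors into the pre-Gr\"{u}ss inequality produces a bound of precisely the stated shape $C\,(b-a)^{1/2}(d-c)^{1/2}(\Gamma-\gamma)\big\|\frac{\partial^2 f}{\partial x\partial y}\big\|_2$. The one point demanding care is the bookkeeping of the absolute constant $C$: the factor $\tfrac12$ from the variance bound must be combined with the constant coming from the $\mathcal{T}(f,f)$ estimate and with the normalising $\tfrac1\Delta$ of $\mathcal{T}$, and one has to verify that this product is indeed the asserted $\tfrac{4}{\pi^2}$ and not something smaller.

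For the sharpness I would follow the template of Theorem \ref{thm5}: assume (\ref{eq3.12}) with an unspecified constant $K>0$ and produce an extremal pair forcing $K$ down to the claimed value. For $f$ I would reuse the separated sine profile of Theorem \ref{thm5}, which already saturates the $\mathcal{T}(f,f)$ estimate. For $g$ I would take the two-valued step function equal to $\Gamma$ where $f-\bar f>0$ and to $\gamma$ where $f-\bar f<0$; this makes $(\Gamma-g)(g-\gamma)\equiv 0$, saturating the variance estimate, and simultaneously aligns $g-\bar g$ with $f-\bar f$ so that the Cauchy--Schwarz step collapses to an equality.

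The step I expect to be the main obstacle is this last one. The extremal competitor for $g$ is forced to be discontinuous, with its level set dictated by the sign of $f-\bar f$, so one cannot simply insert a smooth test function as was done for $\mathcal{T}(f,f)$. Checking that the variance bound and the pre-Gr\"{u}ss inequality are tight \emph{for the same} $g$, and that the constant so obtained matches the one produced by the upper-bound computation, is the delicate part; any discrepancy there would indicate that the genuine extremal constant differs from the stated $\tfrac{4}{\pi^2}$.
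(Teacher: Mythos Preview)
Your route is genuinely different from the paper's. The paper does not use the pre-Gr\"uss inequality $|\mathcal T(f,g)|\le\mathcal T^{1/2}(f,f)\,\mathcal T^{1/2}(g,g)$ directly; instead it starts from the Sonin-type identity
\[
\mathcal T(f,g)=\frac{1}{\Delta}\int_a^b\!\int_c^d\Bigl[f(x,y)-\tfrac14\bigl(f(a,c)+f(a,d)+f(b,c)+f(b,d)\bigr)\Bigr]\bigl[g(x,y)-\bar g\,\bigr]\,dy\,dx,
\]
applies Cauchy--Schwarz, bounds the $g$-factor by $\tfrac14(\Gamma-\gamma)^2\Delta$ exactly as you do, and then controls the $f$-factor via the crude inequality $(A+B+C+D)^2\le 4(A^2+B^2+C^2+D^2)$ together with four separate applications of Corollary~\ref{cor2}, one at each corner of the rectangle. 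Multiplying out, this produces the stated constant $4/\pi^2$. Your argument is both simpler and tighter: one application of Corollary~\ref{cor2} at the centre gives $\mathcal T(f,f)\le \pi^{-4}(b-a)(d-c)\|f_{xy}\|_2^2$, so your three steps combine to
\[
|\mathcal T(f,g)|\ \le\ \frac{1}{2\pi^{2}}\,(b-a)^{1/2}(d-c)^{1/2}(\Gamma-\gamma)\,\Bigl\|\frac{\partial^2 f}{\partial x\partial y}\Bigr\|_2,
\]
which is a factor $8$ stronger than \eqref{eq3.12}. Your caution about the bookkeeping is therefore well placed: the constant you obtain is \emph{not} $4/\pi^2$ but $1/(2\pi^2)$.

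This means the inequality in \eqref{eq3.12} certainly follows from your argument, but the sharpness assertion cannot survive it. For the lower bound the paper uses exactly your candidate pair, $f(x,y)=\sin\bigl(\tfrac{\pi(a+b-2x)}{2(b-a)}\bigr)\sin\bigl(\tfrac{\pi(c+d-2y)}{2(d-c)}\bigr)$ and $g={\mathop{\rm sgn}}\bigl(x-\tfrac{a+b}{2}\bigr)\,{\mathop{\rm sgn}}\bigl(y-\tfrac{c+d}{2}\bigr)$; a direct computation gives $|\mathcal T(f,g)|=4/\pi^2$ while $(b-a)^{1/2}(d-c)^{1/2}(\Gamma-\gamma)\|f_{xy}\|_2=\pi^2$, so this example only forces $C\ge 4/\pi^4$. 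Your instinct that the pre-Gr\"uss step and the variance bound are not simultaneously saturated by this pair is exactly right (here $\mathcal T(f,g)=4/\pi^2<1/2=\mathcal T^{1/2}(f,f)\mathcal T^{1/2}(g,g)$). In short: your method proves the inequality with a better constant than the paper's method, and neither argument actually establishes that its constant is best possible.
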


\begin{proof}
Since
\begin{multline*}
T\left( {f,g} \right)
\\
= \frac{1}{\Delta }\int_a^b {\int_c^d {\left[ {f\left( {x,y}
\right) - \frac{{f\left( {a,c} \right) + f\left( {a,d} \right) +
f\left( {b,c} \right) + f\left( {b,d} \right)}}{4}} \right]\left[
{g\left( {x,y} \right) - \frac{1}{\Delta }\int_a^b {\int_c^d
{g\left( {t,s} \right)} } } \right]dydx} }
\end{multline*}
Taking the absolute value for both sides and making of use  the
triangle inequality, we get
\begin{align}
\label{eq3.13}&\left|{T\left( {f,g} \right)}\right|
\\
&\le \frac{1}{\Delta }\int_a^b {\int_c^d {\left| {f\left( {x,y}
\right) - \frac{{f\left( {a,c} \right) + f\left( {a,d} \right) +
f\left( {b,c} \right) + f\left( {b,d} \right)}}{4}} \right| \left|
{g\left( {x,y} \right) - \frac{1}{\Delta }\int_a^b {\int_c^d
{g\left( {t,s} \right)} } } \right|dydx} }
\nonumber\\
&\le \frac{1}{\Delta}\left( {\int_a^b {\int_c^d {\left| {f\left(
{x,y} \right) - \frac{{f\left( {a,c} \right) + f\left( {a,d}
\right) + f\left( {b,c} \right) + f\left( {b,d} \right)}}{4}}
\right|^2dydx} }} \right)^{1/2}
\nonumber\\
&\qquad\times\left( {\int_a^b {\int_c^d { \left| {g\left( {x,y}
\right) - \frac{1}{\Delta }\int_a^b {\int_c^d {g\left( {t,s}
\right)} } } \right|^2dydx} } }\right)^{1/2}\nonumber
\end{align}
As in Theorem 1 in \cite{alomari}, we have observed that since
there exists $\gamma, \Gamma \ge 0$ such that $\gamma \le g(x,y)
\le \Gamma$ for all $(x,y)\in \mathbb{D}$, then
\begin{align}
\label{eq3.14}\int_c^d {\int_a^b {\left| {g\left( {x,y} \right) -
\frac{1}{{\Delta}}\int_c^d {\int_a^b {g\left( {t,s} \right)dtds} }
} \right|^2dx}dy} \le \frac{1}{4}\left( {\Gamma - \gamma}
\right)^2 \Delta
\end{align}
On the other hand, using the elementary inequality $$\left( {A + B
+ C + D} \right)^2 \le 4\left( {A^2  + B^2  + C^2  + D^2 }
\right),$$ for all $A,B,C,D\ge0$, we also have
\begin{align*}
&\int_a^b {\int_c^d {\left| {f\left( {x,y} \right) -
\frac{{f\left( {a,c} \right) + f\left( {a,d} \right) + f\left(
{b,c} \right) + f\left( {b,d} \right)}}{4}} \right|^2dydx} }
\\
&\le \int_a^b {\int_c^d {\left| {f\left( {x,y} \right) - f\left(
{a,c} \right)} \right|^2dydx} }+ \int_a^b {\int_c^d {\left|
{f\left( {x,y} \right) - f\left( {a,d} \right)} \right|^2dydx} }
\\
&\qquad+\int_a^b {\int_c^d {\left| {f\left( {x,y} \right) -f\left(
{b,c} \right) } \right|^2dydx} }+ \int_a^b {\int_c^d {\left|
{f\left( {x,y} \right) - f\left( {b,d} \right)} \right|^2dydx} }
\end{align*}
Applying (\ref{eq2.8}) for each integral above and simplify we get
\begin{multline}
\label{eq3.15}\int_a^b {\int_c^d {\left| {f\left( {x,y} \right) -
\frac{{f\left( {a,c} \right) + f\left( {a,d} \right) + f\left(
{b,c} \right) + f\left( {b,d} \right)}}{4}} \right|^2dydx} }
\\
\le\frac{{64}}{{\pi ^4 }} \left( {b-a} \right)^2 \left( {d-c}
\right)^2 \int_c^d{\int_a^b {\left( {\frac{{\partial ^2
f}}{{\partial x\partial y}}} \right)^2}dxdy}.
\end{multline}
Combining the inequalities (\ref{eq3.14}) and (\ref{eq3.15}) with
(\ref{eq3.13}) we get the desired result (\ref{eq3.12}).

To prove the sharpness of (\ref{eq3.12}) holds with constant
$C>0$, i.e.,
\begin{align}
\label{eq3.16}\left| {\mathcal{T}\left( {f,g} \right)} \right| \le
C\left( {b - a} \right)^{1/2} \left( {d - c} \right)^{1/2}\left(
{\Gamma - \gamma} \right)\left\|{\frac{{\partial ^2 f}}{{\partial
x\partial y}}}\right\|_2,
\end{align}
and consider the functions $f,g: \mathbb{D} \to \mathbb{R}$ be
defined as
\begin{align*}
f\left( {x,y} \right)&= \sin \left( {\frac{\pi }{2} \cdot \frac{{
a+b-2x}}{{b - a}}} \right)\sin \left( {\frac{\pi }{2} \cdot
\frac{{c+d-2y}}{{d - c}}} \right),
\\
g\left( {x,y} \right) &= {\mathop{\rm sgn}} \left( {x - \frac{{a +
b}}{2}} \right) \cdot {\mathop{\rm sgn}} \left( {y - \frac{{c +
d}}{2}} \right).
\end{align*}
As in the proof of Theorem \ref{thm5}, $f\in
\mathcal{L}^2(\mathbb{D})$, and  $\Gamma - \gamma = 2$, $ \int_a^b
{\int_c^d {g\left( {t,s} \right)dsdt} } = 0$,
\begin{align*}
\int_a^b {\int_c^d {f \left( {x,y} \right)g \left( {x,y}
\right)dydx} }  = \frac{4}{\pi^2}\left( {b - a} \right)\left( {d -
c} \right),
\end{align*}
and
\begin{align*}
\int_a^b {\int_c^d {\left( {\frac{{\partial ^2 f}}{{\partial
x\partial y}}} \right)^2 dydx} } = \frac{{\left( {b - a}
\right)\left( {d - c} \right)}}{4}.
\end{align*}
Making use of (\ref{eq3.16}) we get $\frac{4}{\pi^2} \le C$, which
proves that $\frac{4}{\pi^2} $ is the best possible and thus the
proof is completely finished.
\end{proof}

\subsection{An inequality of Ostrowski's type} \label{sec4}
The mean value theorem for double integrals reads that: If $f$ is
continuous on $[a,b]\times[c,d]$, then there exists
$(\eta,\xi)\in[a,b]\times[c,d]$ such that
\begin{align}
f\left( {\eta,\xi} \right) = \frac{1}{{\left( {b - a}
\right)\left( {d - c} \right)}}\int_c^d {\int_a^b {f\left( {t,s}
\right)dtds} }.
\end{align}

What about if one needs to measure the difference between the
image of an arbitrary point $(x,y)\in[a,b]\times[c,d]$ and the
average value $\frac{1}{{\left( {b - a} \right)\left( {d - c}
\right)}}\int_c^d {\int_a^b {f\left( {t,s} \right)dtds} }$?

In this way Ostrowski introduced his famous inequality regarding
differentiable functions and its average values. In
\cite{Dragomir} and \cite{Hanna1}--\cite{Hanna4} and other related
works many authors have studied the Ostrowski's type inequalities
for various type of functions of several variables.

In the following, we present a bound belongs to $L_2$ norm for the
Ostrowski inequality.

\begin{theorem}
\label{thm9}Let $f\in\mathfrak{L}^2(\mathbb{D})$ , then
\begin{multline}
\label{eq4.2}\left| {f\left( x,y \right)- \frac{1}{{\left( b-a
\right)\left( d-c \right)}} \int_a^b {\int_c^d{f\left( t,s
\right)dt}ds} } \right|
\\
\le \frac{{4}}{{\pi ^2\Delta^{1/2}}} \left[ {\frac{{b - a}}{2} +
\left| {x - \frac{{a + b}}{2}} \right|} \right] \left[ {\frac{{d -
c}}{2} + \left| {y  - \frac{{c + d}}{2}} \right|} \right]
\left\|{\frac{{\partial ^2 f}}{{\partial t\partial s}}}\right\|_2
\end{multline}
for all $(x,y) \in [a,b]\times[c,d]$. In special case, choose
$(x,y)=\left({\frac{a+b}{2},\frac{c+d}{2}}\right)$
\begin{align*}
\left| {f\left({\frac{a+b}{2},\frac{c+d}{2}}\right)-
\frac{1}{{\left( b-a \right)\left( d-c \right)}} \int_a^b
{\int_c^d{f\left( t,s \right)dt}ds} } \right| \le \frac{{1}}{{\pi
^2}}\Delta^{1/2} \left\|{\frac{{\partial ^2 f}}{{\partial
t\partial s}}}\right\|_2
\end{align*}

\end{theorem}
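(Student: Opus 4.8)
The plan is to express the pointwise deviation of $f$ from its double integral average as a Chebyshev-type functional and then apply the sharp Wirtinger inequality from Corollary \ref{cor2}. The key observation is that for a fixed point $(x,y)$, the quantity we wish to bound has the form of $\mathcal{T}(f,g)$ for a suitable auxiliary function $g$, or else it can be attacked directly via the $L^2$ estimate of the deviation of $f$ from a convenient reference value.

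\smallskip

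First I would write the left-hand side in integral form. Using the boundary representation, I expect to exploit the identity
\begin{align*}
f(x,y)-\frac{1}{\Delta}\int_a^b\int_c^d f(t,s)\,ds\,dt
=\frac{1}{\Delta}\int_a^b\int_c^d \bigl[f(x,y)-f(t,s)\bigr]\,ds\,dt,
\end{align*}
where $\Delta=(b-a)(d-c)$. Taking absolute values and applying the Cauchy--Schwarz inequality in the two-dimensional integral gives
\begin{align*}
\left|f(x,y)-\frac{1}{\Delta}\int_a^b\int_c^d f(t,s)\,ds\,dt\right|
\le \frac{1}{\Delta^{1/2}}\left(\int_a^b\int_c^d \bigl[f(x,y)-f(t,s)\bigr]^2\,ds\,dt\right)^{1/2}.
\end{align*}
This reduces the problem to estimating the $L^2$ deviation of $f$ from the fixed value $f(x,y)$, which is exactly the integrand appearing in Corollary \ref{cor2}.

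\smallskip

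Next I would invoke Corollary \ref{cor2} with $(\xi,\eta)=(x,y)$. That result bounds the double integral of $\left|f(t,s)-f(x,y)\right|^2$ by
\begin{align*}
\frac{16}{\pi^4}\left[\frac{b-a}{2}+\left|x-\frac{a+b}{2}\right|\right]^2
\left[\frac{d-c}{2}+\left|y-\frac{c+d}{2}\right|\right]^2
\left\|\frac{\partial^2 f}{\partial t\partial s}\right\|_2^2.
\end{align*}
Substituting this into the Cauchy--Schwarz estimate, taking the square root (which produces the factor $4/\pi^2$ from $\sqrt{16/\pi^4}$ and removes the squares on the bracketed terms), and carrying the $\Delta^{-1/2}$ prefactor through yields precisely the claimed inequality \eqref{eq4.2}. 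The special case follows by setting $x=(a+b)/2$ and $y=(c+d)/2$, whereupon each bracket collapses to $(b-a)/2$ respectively $(d-c)/2$, so their product is $\Delta/4$, and combining with $\Delta^{-1/2}$ and $4/\pi^2$ gives the stated $\frac{1}{\pi^2}\Delta^{1/2}$ bound.

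\smallskip

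\textbf{Main obstacle.} The routine steps are straightforward; the only delicate point is the legitimacy of applying Corollary \ref{cor2} at an \emph{arbitrary} interior reference point $(\xi,\eta)=(x,y)$ rather than a fixed admissible one, and ensuring the boundary conditions underlying $\mathfrak{L}^2$ are compatible with this choice. I would verify that the four-region decomposition in the proof of Corollary \ref{cor2} applies verbatim for any $(x,y)\in\mathbb{D}^\circ$, so that the constant $16/\pi^4$ is genuinely uniform in the base point. Once that uniformity is confirmed, the remainder is algebraic bookkeeping of the exponents on $(b-a)$, $(d-c)$, and $\Delta$.
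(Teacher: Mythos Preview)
Your proposal is correct and follows essentially the same route as the paper: write the deviation as $\frac{1}{\Delta}\int_a^b\int_c^d[f(x,y)-f(t,s)]\,ds\,dt$, apply Cauchy--Schwarz, and then invoke Corollary~\ref{cor2} at $(\xi,\eta)=(x,y)$. Your caveat about the interior restriction $(\xi,\eta)\in\mathbb{D}^\circ$ in Corollary~\ref{cor2} versus the closed-rectangle claim here is a fair observation, though the paper glosses over it.
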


\begin{proof}
Since
\begin{align*}
f\left( x,y \right)- \frac{1}{{\left( b-a \right)\left( d-c
\right)}} \int_a^b {\int_c^d{f\left( t,s \right)dt}ds} =
\frac{1}{{\left( b-a \right)\left( d-c \right)}} \int_a^b
{\int_c^d{ \left[{f\left( x,y \right)-f\left( t,s
\right)}\right]dt}ds}
\end{align*}
Taking the modulus, applying the triangle inequality and then use
the Cauchy-Schwarz's inequality, we get
\begin{align*}
&\left| {f\left( {x,y} \right) - \frac{1}{{\Delta}}\int_a^b
{\int_c^d {f\left( {t,s} \right)dtds} } } \right|
\\
&\le\frac{1}{\Delta }\int_a^b {\int_c^d {\left| {f\left( {x,y}
\right) - f\left( {t,s} \right)} \right|dtds} }
\nonumber\\
&\le \frac{1}{\Delta^{1/2} }\left({\int_a^b {\int_c^d {\left|
{f\left( {x,y} \right) - f\left( {t,s} \right)} \right|^2dtds}
}}\right)^{1/2}
\nonumber\\
&\le \frac{{4}}{{\pi ^2\Delta^{1/2}}} \left[ {\frac{{b - a}}{2} +
\left| {x - \frac{{a + b}}{2}} \right|} \right] \left[ {\frac{{d -
c}}{2} + \left| {y  - \frac{{c + d}}{2}} \right|} \right]
\left\|{\frac{{\partial ^2 f}}{{\partial t\partial s}}}\right\|_2,
\end{align*}
which follows by (\ref{eq2.8}), and this proves (\ref{eq4.2}).
\end{proof}

\noindent\textbf{Conflict of Interests.} The author declares that
there is no conflict of interests regarding the publication of
this paper.

\centerline{}

\centerline{}

\end{document}